\numberwithin{equation}{section}
\newcommand{\Q}{\mathbb{Q}}
\newcommand{\CP}{\mathbb{CP}}
\newcommand{\Z}{\mathbb Z}
\newcommand{\la}{\lambda}
\newcommand{\x}{\times}
\newcommand{\del}{\partial}
\newcommand{\co}{\thinspace\colon}
\newcommand{\lra}{\longrightarrow}
\newcommand*\sm[1]{\left(\begin{smallmatrix}#1\end{smallmatrix}\right)}
\newcommand*\mat[1]{\begin{pmatrix}#1\end{pmatrix}}
\newtheorem{thm}{Theorem}
\newtheorem{lemma}[thm]{Lemma}
\newtheorem{rem}[thm]{Remark}
\newtheorem{prop}[thm]{Proposition}
\theoremstyle{definition}
\newtheorem{rmk}{Remark}
\newtheorem{quest}{Question}
\newtheorem*{ack}{Acknowledgments}
\title{On Stein rational balls smoothly but not symplectically embedded in $\CP^2$}
\begin{document}
\author{Paolo Lisca}
\author{Andrea Parma}
\address{Dipartimento di Matematica, Largo Bruno Pontecorvo 5, 56127 Pisa, Italy} 
\email{paolo.lisca@unipi.it,\ andrea.parma94@gmail.com}	
\keywords{Rational homology balls, smooth embeddings, symplectic embeddings}
\date{\today}
\thanks{2020 {\it Mathematics Subject Classification.}\, 57R40 (Primary), 57K43, 57R17 (Secondary).} 

\begin{abstract}
We extend recent work of Brendan Owens by constructing a doubly infinite family of Stein rational homology balls which can be smoothly but not symplectically embedded in $\CP^2$. 
\end{abstract}

\maketitle	

\section{Introduction and statement of results}\label{s:intro} 

Let $p>q\geq 1$ be coprime integers and $B_{p,q}$ the rational homology ball smoothing of the quotient singularity $\frac{1}{p^2}(pq-1,1)$. Using results by Khodorovskiy~\cite{Kh13} it is not hard to show~\cite[\S~2.1]{ES18} that if the positive  integers $p_1$, $p_2$ and $p_3$ form a {\em Markov triple}, that is $p_1^2+p_2^2+p_3^2 = 3 p_1 p_2 p_3$, 
then there are pairwise disjoint symplectic embeddings
\begin{equation}\label{e:disjemb} 
B_{p_i,q_i}\subset\CP^2,\quad i=1,2,3,
\end{equation}
where $q_i=\pm 3p_j/p_k \bmod p_i$ with $\{i,j,k\}=\{1,2,3\}$. Note that the sign is irrelevant because $B_{p,q}$ is symplectomorphic to $B_{p,p-q}$~\cite[Remark~2.8]{ES18}. The existence of the simultaneous symplectic embeddings~\eqref{e:disjemb} comes from the fact that when $(p_1,p_2,p_3)$ is a Markov triple there is a $\Q$-Gorenstein smoothing to $\CP^2$ of the weighted projective space $\CP(p_1^2,p_2^2,p_3^2)$. It is not possible to construct more than three disjoint symplectic embeddings using smoothings of singular surfaces. In fact, Hacking and Prokhorov~\cite{HP10} showed if $X$ is a projective surface with quotient singularities which has a $\CP^2$ smoothing, then $X$ is a $\Q$-Gorenstein deformation of such a weighted projective plane. Evans and Smith~\cite[Theorem~1.2]{ES18} generalized this result to the symplectic category, showing that if $B_{p_i,q_i}\subset\CP^2$, $i=1,...,N$ is a collection of pairwise disjoint symplectic embeddings then $N\leq 3$, the $p_i$ belong to Markov triples and the $q_i$'s must satisfy certain constraints. In particular, if $B_{p,q}\subset\CP^2$ is a symplectic embedding then $p$ must belong to a Markov triple and divide $q^2+9$.

Owens~\cite[Theorem~1]{Ow20} recently proved the existence of {\em smooth} embeddings 
\[
B_{F_{2n+1}, F_{2n-1}}\subset\CP^2
\]
for each $n\geq 1$, where $F_{2n-1}$ denotes the odd Fibonacci number, recursively defined by 
\[
F_1 = 1,\ F_3 = 2,\ F_{2n+3} = 3F_{2n+1} -  F_{2n-1}.
\]
Moreover, he showed that the pair $(F_{2n+1},F_{2n-1})$ satisfies the Evans-Smith constraints only if $n=1$, and therefore that $B_{F_{2n+1}, F_{2n-1}}$ does not embed {\em symplectically} in $\CP^2$ for $n>1$.

In this paper we extend Owens' family of smooth embeddings to a two-parameter family of smooth embeddings $B_{p,q}\subset\CP^2$ such that $B_{p,q}$ cannot be symplectically embedded in $\CP^2$. 

Recall that to a string of integers $s=(a_1,\ldots, a_n)$ is uniquely associated a smooth, oriented 4-dimensional plumbing $P(s) = P(a_1,\ldots, a_n)$. When $a_i\geq 2$ for each $i$, the Hirzebruch-Jung continued fraction 
\[
[s] = [a_1,\ldots, a_n] = a_1 - \cfrac{1}{a_2-\cfrac{1}{\cdots - \cfrac{1}{a_n}}}
\]
is well-defined, and the oriented boundary of $P(s)$ is the lens space $L(p,p-q)$, where $\frac{p}{q} = [a_1,\ldots, a_n]$. 

Given integers $k\geq -1$ and $m\geq 1$, define
\[
s_{k,m} := (2,(2^{[m-1]},m+2)^{[k+1]},2,2,(2^{[m-1]},m+2)^{[k+1]}),
\]
where $x^{[n]}$ means $x$ repeated $n$ times if $n>0$ and omitted when $n=0$. 
We observe in Remark~\ref{r:type-of-lens-space} below that the lens space $L(s_{k,m})=\del P(s_{k,m})$ is of the form $L(p^2,pq-1)$ for some $p>q\geq 1$. We denote by $B(s_{k,m})$ the corresponding rational homology ball $B_{p,q}$. 

When $m=1$, $s_{k,1} = (2,3^{[k+1]},2,2,3^{[k+1]})$ and using Riemenschneider's point rule~\cite{Ri74} one can check that if $\frac{p}{q} = [s_{k,1}]$ then 
$\frac{p}{p-q} = [3^{[k+1]},5,3^{[k]},2]$. Moreover, the proof of~\cite[Theorem~1]{Ow20} shows that 
\[
\frac{F_{2k+5}^2}{F_{2k+5} F_{2k+3}-1} = [3^{[k+1]},5,3^{[k]},2], 
\]
therefore $B(s_{k,1}) = B_{F_{2k+5}, F_{2k+3}}$. Therefore Owens' family is precisely the one-parameter subfamily $\{B(s_{k,1})\}_{k\geq -1}$. Notice that the string $s_{-1,m}$ reduces to $(2,2,2)$ for each $m$. In this case the ball  $B(s_{-1,m})=B_{2,1}$ embeds symplectically in $\CP^2$ as the complement of a neighborhood of a smooth conic. The following is our main result.

\begin{thm}\label{t:main1}
	Let $k\geq -1$ and $m\geq 1$, $m$ odd. Then, 
	\begin{enumerate}
		\item[(1)]
		$B(s_{k,m})$ smoothly embeds in $\CP^2$;
		\item[(2)]
		$B(s_{k,m})$ does not symplectically embed in $\CP^2$ if $k\geq 0$.
	\end{enumerate}
\end{thm}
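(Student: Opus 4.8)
The plan is to prove (1) by an explicit construction and to derive (2) from it together with the Evans--Smith theorem quoted above, which reduces (2) to an elementary divisibility statement about the parameters of $B(s_{k,m})$. For (1), I would realize $\CP^2$ as a union $\CP^2=B(s_{k,m})\cup_{L(s_{k,m})}W_{k,m}$, where $W_{k,m}$ is a smooth four--manifold with the rational homology of $\CP^2$ minus a ball; since the form of $\CP^2$ is unimodular and $B(s_{k,m})$ is a rational homology ball, $W_{k,m}$ is forced to carry $H_2(\CP^2;\Q)$ with intersection form $\langle 1\rangle$. Concretely: write down a Kirby diagram for $B(s_{k,m})$ --- a handle decomposition with one $1$--handle and one $2$--handle, the latter attached along an explicit knot with framing read off from $s_{k,m}$ --- glue to it a hand--built $W_{k,m}$ assembled from $2$--handles along a chain of unknots coming from the Riemenschneider--dual string $s_{k,m}^{*}$, and then check, by a sequence of blow--downs, handle slides and cancellations, that the resulting closed manifold reduces to a single $+1$--framed unknot, i.e. is the standard $\CP^2$. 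The natural way to organize this is by induction on $k$: passing from $s_{k,m}$ to $s_{k+1,m}$ inserts a block $(2^{[m-1]},m+2)$ on each side of the string, which on Kirby diagrams is realized by a symmetric sequence of blow--ups and slides that can be undone; the base case is the embedding of $B_{2,1}=B(s_{-1,m})$ as the complement of a conic. The parity of $m$ enters in making the $2$--handle framings, hence the characteristic vectors, odd as required by the form $\langle 1\rangle$. An alternative packaging of the same construction is as a rational blowdown: embed the negative--definite linear plumbing with boundary $-L(s_{k,m})$ (self--intersections dual to $s_{k,m}$) as a configuration of smoothly embedded spheres --- proper transforms of lines and conics --- inside some $\CP^2\#N\,\overline{\CP^2}$, with complement $B(s_{k,m})$, so that rationally blowing it down returns $\CP^2$.

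For (2), write $B(s_{k,m})=B_{p_{k,m},q_{k,m}}$. Riemenschneider's point rule applied to $s_{k,m}$ produces the dual string $s_{k,m}^{*}$, and $[s_{k,m}^{*}]=p_{k,m}^{2}/(p_{k,m}q_{k,m}-1)$ realizes $(p_{k,m},q_{k,m})$ as a product of $m$--dependent $2\times2$ integer matrices, hence as a second--order linear recursion $x_{k+1}=(m^2+2)\,x_k-x_{k-1}$ in $k$, with initial data $(p_{-1,m},q_{-1,m})=(2,1)$ and $(p_{0,m},q_{0,m})=((m+1)^2+1,\ m+1)$; for $m=1$ this is the Fibonacci recursion, recovering $p_{k,1}=F_{2k+5}$ and $q_{k,1}=F_{2k+3}$. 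By the Evans--Smith theorem a symplectic embedding $B(s_{k,m})\hookrightarrow\CP^2$ would force $p_{k,m}\mid q_{k,m}^{2}+9$, so it suffices to show this fails for $k\geq0$. I would prove by induction on $k$, using the recursion, the congruence $q_{k,m}^{2}\equiv-1\pmod{p_{k,m}}$, so that $q_{k,m}^{2}+9\equiv8\pmod{p_{k,m}}$; since $p_{k,m}\geq5$ for $k\geq0$, no such $p_{k,m}$ divides $8$, whence $p_{k,m}\nmid q_{k,m}^{2}+9$ and there is no symplectic embedding. (For $k=-1$ one has $p_{-1,m}=2\mid q_{-1,m}^{2}+9=10$, consistent with the fact that $B_{2,1}$ does embed symplectically, which is why the hypothesis $k\geq0$ is necessary; note also that $p_{k,m}$ need not be a Markov number, e.g. $p_{0,3}=17$, so we are violating the divisibility rather than the Markov condition of Evans--Smith.)

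I expect the main obstacle to be the bookkeeping in (1): checking that the inductive sequence of Kirby moves --- or, equivalently, the configuration of curves in the blow--up --- really closes up to $\CP^2$ for every pair $(k,m)$, and isolating precisely where the parity of $m$ is used. By comparison (2) should be routine once the recursion for $(p_{k,m},q_{k,m})$ and the congruence $q_{k,m}^{2}\equiv-1\pmod{p_{k,m}}$ are established, the only subtlety being to carry these out uniformly over the two--parameter family rather than just the slice $m=1$ treated by Owens.
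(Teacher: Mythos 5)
Your part (2) is essentially the paper's argument and is correct in outline: the boundary of $B(s_{k,m})$ is $L(p^2,pq-1)$ with $(p,q)=(p_{k,m},q_{k,m})$ given exactly by the recursion $x_{k+1}=(m^2+2)x_k-x_{k-1}$ with the initial data you state, the key point is the congruence $q_{k,m}^2\equiv -1\pmod{p_{k,m}}$ (in the paper this is the determinant identity $P_{2k+2}T_{2k+1}-Q_{2k+1}^2=1$ for entries of products of $SL_2(\Z)$ matrices), and the conclusion reduces to $p_{k,m}\nmid 8$. You still owe the inductive proof of the congruence --- the modulus changes with $k$, so you need an auxiliary integer sequence $t_k$ with $q_k^2+1=p_kt_k$ satisfying the same recursion, or the matrix-determinant shortcut --- but this is routine, as you say.

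Part (1) is where the theorem lives, and there you have a plan rather than a proof: the construction of the complementary piece $W_{k,m}$ and the verification that $B(s_{k,m})\cup W_{k,m}\cong \CP^2$ are precisely the content of the result, and you defer both. Two of the concrete suggestions you offer would not work as stated. First, if $W_{k,m}$ is literally the linear plumbing on the Riemenschneider-dual string (2-handles along a chain of unknots), then $B(s_{k,m})\cup W_{k,m}$ has second Betti number equal to the length of that string, which grows with $k$ and $m$, so the union cannot be $\CP^2$ without substantial extra cancellation that you have not exhibited. Second, the ``alternative packaging'' via a configuration of proper transforms of lines and conics in $\CP^2\# N\overline{\CP^2}$ is self-defeating: such proper transforms are complex, hence symplectic, spheres, and a symplectic rational blowdown along them would produce a \emph{symplectic} embedding $B(s_{k,m})\subset\CP^2$, contradicting part (2). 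Any configuration realizing these embeddings must be only smoothly, not symplectically, embedded. The paper instead writes $\CP^2=S^1\times D^3\cup h_1\cup h_2\cup h_3\cup S^1\times D^3$ with $B(s_{k,m})=S^1\times D^3\cup h_2$, where the three $2$-handles are attached along curves lying on nested tori in $S^1\times S^2$; their homology coordinates are governed by products of the matrices $A_{a_i}$, and an explicit ``sliding map'' induction (on the index $l$ of a polynomial family, not on $k$ directly) reduces the triple to a standard diagram of $\CP^2$. The oddness of $m$ is used there to guarantee that an integral number, namely $(m-1)/2$, of reduction steps lands on the right terminal triple --- not, as you guess, through a parity condition on characteristic vectors. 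Until the Kirby-calculus reduction (or an equivalent sphere configuration) is actually produced and verified for all $(k,m)$, part (1) is unproved.
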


\begin{rmk}\label{r:m=1}
	Theorem~\ref{t:main1} is equivalent to~\cite[Theorem~1]{Ow20} when $m=1$.
\end{rmk} 

We prove Theorem~\ref{t:main1}(1) by showing that, for each $k\geq -1$ and $m\geq 1$, there is a smooth decomposition  
\[
\CP^2 = S^1\x D^3\cup h_1\cup h_2\cup h_3\cup S^1\x D^3,
\]
where $h_i$, for $i=1,2,3$ is a $2$-handle and $B(s_{k,m}) = S^1\x D^3\cup h_2$. Theorem~\ref{t:main1}(2) follows from~\cite[Theorem~1]{Ow20} if $m=1$, while for $m>1$ 
we show that $\del B(s_{k,m})$ is of the form $L(p^2,pq-1)$, where $p$ does not divide $q^2+9$. The conclusion follows by the results of~\cite{ES18}. 

In~\cite{Ow20} Owens also proves another result (Theorem~2), which 
states that {\em a disjoint union of two or more of the balls $B(s_{k,1})$ cannot be 
	smoothly embedded in $\CP^2$}. This is viewed in~\cite{Ow20} as mild support to a 
conjecture of Koll\'ar~\cite{Ko08}, which would imply that at most three of the rational 
balls $B(s_{k,1})$ may embed smoothly and disjointly in $\CP^2$. It is therefore 
natural to ask whether the analogue of~\cite[Theorem~2]{Ow20} holds for our extended family or rational balls:
\begin{quest}\label{q:1}
	Can a disjoint union of two or more balls $B(s_{k,m})$ be smoothly embedded in $\CP^2$ ? 
\end{quest}
We plan to address Question~\ref{q:1} in a future paper. This paper is organized as follows. In Section~\ref{s:prelim} we fix notation and collect some preliminary material. Section~\ref{s:theorem1} contains the proof of Theorem~\ref{t:main1}. 

\begin{ack}
	The authors wish to thank Brendan Owens for a stimulating email exchange and an anonymous referee for helpful comments. The present work is part of the MIUR-PRIN project 2017JZ2SW5. 
\end{ack}

\section{$SL_2(\Z)$-framed chain links and $SL_2(\Z)$-slam-dunks}\label{s:prelim}

Given a string of integers $s=(a_1,...,a_n)$, let 
\[
K = K_1\sqcup\ldots\sqcup K_n\subset S^3
\]
be a chain link consisting of $n$ oriented, framed unknots, with framing coefficients specified by $s$. Performing Dehn surgery along each $K_i$ with coefficient $a_i$ gives 
rise, in the notation of Section~\ref{s:intro}, to the lens space $L(s)=\del P(s)$. 
We shall need to keep track of detailed information about the gluing maps involved in the Dehn surgeries on the components of $K$. In order to do that we are going to view the framed link $K$ as an {\em $SL_2(\Z)$-framed link} in the sense of~\cite[Appendix]{KM94}, although we will use our own notation rather than the notation from~\cite{KM94}.  

Let $Y := S^3\setminus N$ be the complement of a tubular neighborhood $N := N_1 \sqcup\ldots\sqcup N_n$ of $K_1 \sqcup\dots\sqcup K_n$. We can express $L(s)$ as the result of gluing $n$ solid tori $V_1,...,V_n$ to $Y$. The gluing maps $\varphi_i: \partial N_i \rightarrow \partial V_i$ are determined up to isotopy by $2 \times 2$ matrices if we specify, for each of the tori $\del N_i$ and $\del V_i$, two oriented curves that generate its first homology group -- we identify such oriented curves with their homology classes and the maps $\varphi_i$ with the induced maps in homology. We can do this as follows:
\begin{itemize}
	\item orient $K_1,..., K_n$ so that $\text{lk}(K_i,K_{i+1})=-1 \; \forall i$;
	\item in each $\partial N_i$, choose a canonical longitude $\lambda_i$ with the same orientation as $K_i$, and an oriented meridian $\mu_i$ that winds around $K_i$ according to the right-hand convention;
	\item regarding each $V_i$ as the tubular neighborhood of an unknot in $S^3$, choose a canonical longitude $\ell_i$ and a meridian $m_i$ in $\partial V_i$ as above;
	\item for each $i$, choose the basis $(\lambda_i,\mu_i)$ for $H_1(\partial N_i)$ and the basis 
	$(\ell_i, m_i)$ for $H_1(\partial V_i)$.
\end{itemize}
Notice that, with these assumptions, $Y$ and $V_1,...,V_n$ have compatible orientations if and only if the matrices representing $\varphi_1,...,\varphi_n$ with respect to the bases $(\lambda_i,\mu_i)$ and $(\ell_i, m_i)$ have determinant $1$. With this in mind, and recalling that each $m_i$ must be sent by $\varphi_i^{-1}$ to $\lambda_i+a_i \mu_i$, we can choose
$\varphi_i$ with matrix 
\begin{equation}\label{e:2x2}
A_{a_i}, \quad \text{where }A_m \text{ denotes the matrix }\begin{pmatrix}
m & -1\\
1 & 0
\end{pmatrix}\in SL_2(\Z)\ \text{and}\ m \in\Z.
\end{equation}
After these choices, each component $K_i$ is decorated with the matrix $A_{a_i}$ rather than 
simply with the integer $a_i$, and $K$ becomes an $SL_2(\Z)$-framed link. Moreover, a 
presentation $\{(K_i,A_{a_i})\}_{i=1}^n$ can be modified via {\em $SL_2(\Z)$-slam-dunks} 
(cf.~\cite[Lemma~(A.2)]{KM94}). We describe these modifications using our notation in the 
following proposition. 

\begin{prop}\label{p:slamdunk} 
	Let $s=(a_1,\ldots, a_n)$ with $n>1$ and $L=L(s)$. Then, 
	\begin{enumerate}
		\item[(1)]
		For $t=1,\ldots, n$, the oriented meridian $\mu_t$ is isotopic to a curve lying in a regular neighborhood of $\del V_1\subset L$. Its homology class has coordinates, with respect to the basis $\ell_1$, $m_1$, given by the second column of $A_{a_1}\cdots A_{a_t}$.
		\item[(2)]
		for each $t = 2,\ldots,n$ the $SL_2(\Z)$-framed link presentation $\{(K_i,A_{a_i})\}_{i=1}^n$ of $L$ can be modified into another presentation of $L$ given by $\{(K_i,B_i)\}_{i=t}^n$, where 
		\[
		B_t = A_{a_1}\cdots A_{a_t}\quad\text{and}\quad B_i = A_{a_i}\quad\text{for $i>t$}.
		\]
		\item[(3)]
		$L$ is orientation-preserving diffeomorphic to $L(p,p-q)$, where $\sm{p\\q}$ is the first column of $A_{a_1}\cdots A_{a_n}$.
	\end{enumerate}
\end{prop}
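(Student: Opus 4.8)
The plan is to prove all three parts together by induction on $n$, using the $SL_2(\Z)$-slam-dunk move of~\cite[Lemma~(A.2)]{KM94} as the engine. The key geometric observation is the following: when $n>1$, the solid torus $V_1$ can be pushed into $Y$ along the linking annulus between $K_1$ and $K_2$, so that after an isotopy of $L$ the meridian $m_1\subset\del V_1$ becomes identified with the curve $\lambda_2 + a_1\mu_2$ inside a neighborhood of $\del N_2$ — this is exactly the slam-dunk. More precisely, with the orientation and framing conventions fixed in Section~\ref{s:prelim} (so that $\mathrm{lk}(K_i,K_{i+1})=-1$ and $\varphi_i$ has matrix $A_{a_i}$), filling in $V_1$ followed by the slam-dunk replaces the pair $(K_2,A_{a_2})$ by the pair $(K_2, A_{a_1}A_{a_2})$ and deletes $K_1$ altogether. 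This is the base case $t=2$ of part (2), and I would verify the matrix identity $B_2 = A_{a_1}A_{a_2}$ directly by tracking how $(\lambda_2,\mu_2)$ and $(\ell_2,m_2)$ transform: the new gluing map is the composition of $\varphi_2$ with the identification of $\del V_1$-data coming from $\varphi_1$, which on homology is precisely multiplication by $A_{a_1}$ on the left.

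For part (2) with general $t$, I would iterate: having absorbed $V_1$ into the framing of $K_2$ to get the matrix $A_{a_1}A_{a_2}$ on $K_2$, I apply the same slam-dunk to absorb $K_2$ into $K_3$, producing $A_{a_1}A_{a_2}A_{a_3}$ on $K_3$, and so on, $t-1$ times in total. The components $K_i$ with $i>t$ are never touched, so they keep their matrices $A_{a_i}$; this gives the stated presentation $\{(K_i,B_i)\}_{i=t}^n$ with $B_t = A_{a_1}\cdots A_{a_t}$ and $B_i = A_{a_i}$ for $i>t$. Here one must be slightly careful that each intermediate slam-dunk is legitimate, i.e.\ that at every stage there is still at least one further component to slam-dunk into; this is why the statement requires $n>1$ and $t\leq n$, and why when we reach $K_t$ we stop (there is still $K_{t+1},\dots,K_n$ around, but we simply choose not to continue).

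Part (1) then follows by reading off the meridian: after performing the first $t-1$ slam-dunks, $\mu_t$ — which originally sat in $\del N_t$ — lies in a neighborhood of $\del V_t$ in the new picture, but chasing the identifications back through the $t-1$ slam-dunks it is isotopic into a neighborhood of $\del V_1$, and its class in the basis $(\ell_1,m_1)$ is obtained by applying $A_{a_1}\cdots A_{a_{t-1}}$ to the class of $\mu_t$ in the $(\ell_t\text{-}\,,m_t)$-type basis; since $\mu_t$ corresponds to the second standard basis vector transported by $A_{a_t}$ (recall $\varphi_t$ sends $m_t$ to $\lambda_t+a_t\mu_t$ and has matrix $A_{a_t}$), the coordinates are the second column of $A_{a_1}\cdots A_{a_{t-1}}A_{a_t} = A_{a_1}\cdots A_{a_t}$, as claimed. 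Part (3) is the case $t=n$ of this circle of ideas combined with the standard fact that a lens space presented by a single $SL_2(\Z)$-framed unknot with matrix $\sm{\alpha&\beta\\\gamma&\delta}$ is $L(\alpha,\gamma)$ up to orientation, together with the identification $L(p,\gamma)\cong L(p,p-q)$ when $\gamma\equiv q$; writing $A_{a_1}\cdots A_{a_n} = \sm{p&*\\q&*}$ gives the result. The main obstacle I expect is not any single computation but rather the bookkeeping: keeping the orientation conventions consistent through the iterated slam-dunks so that all matrices genuinely stay in $SL_2(\Z)$ (determinant $+1$) and so that the "$p-q$ versus $q$" orientation issue in part (3) comes out with the sign stated. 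I would handle this by fixing once and for all the convention that $\varphi_i$ has matrix $A_{a_i}$ with respect to $(\lambda_i,\mu_i)$ and $(\ell_i,m_i)$, checking the base case $n=2$ by hand in full detail, and then letting the inductive step be a formal consequence of the matrix multiplication.
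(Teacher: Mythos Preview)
Your approach is essentially the same as the paper's: perform the $SL_2(\Z)$-slam-dunk to absorb $K_1$ into $K_2$ (with the new gluing matrix $A_{a_1}A_{a_2}$), iterate to get part~(2), read off $\mu_t$ via the composed gluing maps for part~(1), and take $t=n$ for part~(3). One small correction on part~(3): the single-component presentation with matrix $\sm{p&a\\q&b}$ does not directly give $L(p,q)$; the paper computes the surgery framing as $p/a$ (via $\sm{p&a\\q&b}^{-1}=\sm{b&-a\\-q&p}$), notes $a(p-q)\equiv 1\bmod p$, and then uses $L(p,a)\cong L(p,p-q)$---this is exactly the bookkeeping you flagged as the main obstacle, and once tracked carefully it yields $L(p,p-q)$ rather than $L(p,q)$.
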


\begin{proof} 
	We first describe the case $t=2$. 
	Let $L'$ be the lens space arising from Dehn surgeries along all the components of the chain link except $K_1$, so that $L:=L(s)$ is obtained from $L'$ by doing the remaining surgery along $K_1$. Since $K_1$ is a meridian of $K_2$, we can isotope it, as an oriented knot in $L'$, to $-\mu_2=\varphi_2^{-1}(\ell_2)$ and then to the oriented core $K'_1$ of $V_2$. See Figure~\ref{fig:isotopy}, where the blue and the red oriented curves on $\del N_2$ are mapped by $\varphi_2$, respectively, to $\ell_2$ and $m_2$. 
	\begin{figure}[ht]
		\labellist
		\hair 2pt
		\pinlabel $K_1$ at 20 530
		\pinlabel $\del N_2$ at 200 750
		\pinlabel $K_2$ at 260 660
		\pinlabel \textcolor{red}{$\la_2+a_2\mu_2$} at 460 610
		\pinlabel \textcolor{blue}{$-\mu_2$} at 370 400
		\endlabellist
		\centering
		\includegraphics[scale=0.28]{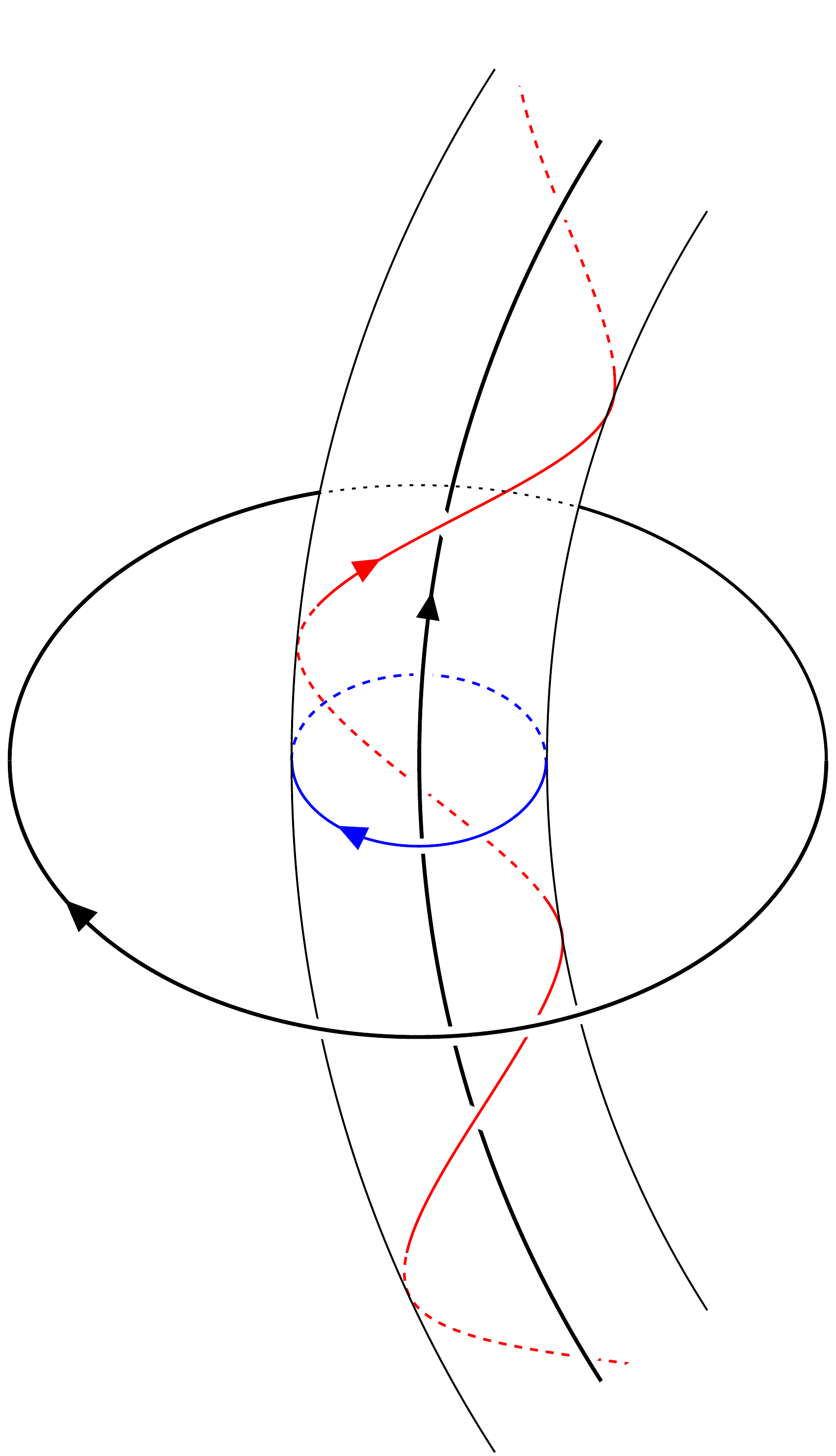}
		\caption{}
		\label{fig:isotopy}
	\end{figure}
	The isotopy from $K_1$ to $K'_1$ can be extended to an isotopy of tubular neighborhoods from $N_1$ to $N_1' \subset V_2$, whose boundary is parallel to $\partial V_2$. 
	Now $L$ is obtained by cutting $N_1'$ out of $V_2$ and pasting $V_1$ in its place, with the identification between $\del N'_1$ and $\del V_1$ given by a new gluing map $\varphi_1'$. Notice that, since $V_2 \setminus N'_1$ is diffeomorphic to $T^2 \times [0,1]$, we may unambiguously take $(\ell_2,m_2)$ as a basis for the domain of $\varphi'_1$ (regarded as a map between homology groups). With this assumption, $\varphi'_1$ is represented by the same matrix as $\varphi_1$. In fact, as already observed, $K'_1$ and $\ell_2$ are isotopic as oriented knots, and $K'_1$ admits $m_2$ as a right-hand-oriented meridian. Hence, $\ell_2$ and $m_2$ also play the role of the original $\lambda_1$ and $\mu_1$. Moreover, it makes sense to consider the composition 
	\[
	\varphi_1' \circ \varphi_2\co H_1(\del N_2)\stackrel{\varphi_2}{\lra} H_1(\del V_2) = 
	H_1(N'_1)\stackrel{\varphi'_1}{\lra} H_1(\del V_1), 
	\]
	which is represented by the matrix $A_{a_1} \cdot A_{a_2}$. This concludes the description of the $SL_2(\Z)$-slam-dunk when $t=2$. 
	
	We now describe the construction for $t>2$ (assuming $n \ge 3$). We first apply an $SL_2(\Z)$-slam-dunk to the first component, so that $K_1$ is removed from the chain link. Now $K_2$ is a meridian of $K_3$, so we can apply another $SL_2(\Z)$-slam-dunk along $K_2$, and so on. In general, for each $1 \le i < t$ we remove a tubular neighborhood $N'_i$ of the core of $V_{i+1}$, and identify its boundary with $\partial V_i$ via a gluing map $\varphi'_i$ represented by $A_{a_i}$ with respect to the bases $(\ell_{i+1}, m_{i+1})$ and $(\ell_i, m_i)$.
	By construction, for each $t=2,\ldots, n$ the composition of gluing maps 
	\[
	\varphi'_1 \circ\cdots \circ \varphi'_{t-1} \circ \varphi_t \co 
	H_1(\del N_t) \lra H_1(\del V_1)
	\]
	identifies $\mu_t=-\varphi_t^{-1}(\ell_t)$ with a curve whose coordinates with respect to $\ell_1$ and $m_1$ are given by the second column of $A_{a_1}\cdots A_{a_{t}}$.
	Similarly, after gluing, the coordinates of 
	$\lambda_t$ with respect to $\ell_1$ and $m_1$ are given by the first column of 
	$A_{a_1}\cdots A_{a_{t}}$. This proves (1) and (2). 
	
	To prove (3) we choose $t=n$, so that the modified link has a single component. The result of gluing together all the ``layers'' $V_{i+1} \setminus N'_i$ for $i<n$ is diffeomorphic to $T^2 \times [0,1]$ and the boundaries of the glued-up pieces are parallel tori. Moreover, the diffeomorphism with $T^2 \times [0,1]$ can be chosen so that:
	\begin{itemize}
		\item 
		$T^2 \times \{0\}$ and $T^2 \times \{1\}$ are identified with $\del V_1$ and $\del V_n$ respectively;
		\item 
		the other parallel tori are identified with $T^2 \times \{ h\}$ for $n-2$ pairwise distinct values of $h \in (0,1)$.
	\end{itemize}
	This shows that $L(s)$ results from gluing two solid tori to $T^2 \x [0,1]$. Moreover, the boundaries of the meridian disks of the solid tori are $m_1 \subset T^2 \x \{0\}$ and $\varphi_n(\lambda_n) \subset T^2 \x \{1\}$. By construction, the curve $\varphi_n(\lambda_n)$ is isotopic to $p\ell_1+q m_1$, where 
	\[
	A_{a_1}\cdots A_{a_n} = \mat{p&a\\q&b}\in SL_2(\Z). 
	\]
	Since $\sm{p&a\\q&b}^{-1} = \sm{b&-a\\-q&p}$, $L(s)$ is the result of a Dehn surgery with framing $\frac{p}{a}$ along an unknot, where $a(p-q)\equiv 1\bmod p$. Part (3) follows immediately from the fact that the lens spaces $L(p,q)$ and $L(p,q')$ are orientation-preserving diffeomorphic when $qq'\equiv 1\bmod p$. 
\end{proof}

\section{Proof of Theorem~\ref{t:main1}}\label{s:theorem1}

The first part of Theorem~\ref{t:main1} states that $B(s_{k,m})$ smoothly embeds in $\CP^2$ if $m$ is odd. We already observed in Section~\ref{s:intro} that this is true if $k=-1$, therefore in the following we assume $k \ge 0$. 

Consider the string $s_{k,m}$ of Section 1, with $k \ge 0$ and $m$ odd, and define:
\begin{itemize}
	\item $s'_{k,m}:=(2,(2^{[m-1]},m+2)^{[k+1]},1,2,(2^{[m-1]},m+2)^{[k+1]})$;
	\item $s''_{k,m}:=(2^{[m-1]},1,m+2,(2^{[m-1]},m+2)^{[k]},2^{[m]},1,m+2,(2^{[m-1]},m+2)^{[k]})$.
\end{itemize}
It is straightforward to check that the strings $s'_{k,m}$ and $s''_{k,m}$ are both obtained from $s_{k,m}$ by changing some terms from $2$ to $1$, and that they both ``blow-down'' to $(0)$ in the sense of~\cite[Definition~2.1]{Li08}, therefore $L(s'_{k,m}) = L(s''_{k,m}) = S^1\x S^2$. 
\begin{rmk}\label{r:type-of-lens-space}
	Applying~\cite[Lemma~2.4]{Li08} to the string $s'_{k,m}$ immediately implies that $L(s_{k,m})$ is of the form $L(p^2,pq-1)$ for some $p>q\geq 1$. 
\end{rmk}
Denote by $\nu_2\subset S^1\x S^2$ the curve corresponding to the meridian of the $\langle 1\rangle$-framed unknot of the diagram associated with $s'_{k,m}$. In Section~\ref{s:prelim} the same meridian was denoted $\mu_{(k+1)m+2}$. Denote by $W$ the smooth 4-manifold with boundary obtained by viewing $S^1\x S^2$ as the boundary of $S^1\x D^3$ and attaching a 4-dimensional 2-handle along $\nu_2$ with framing $-1$. In view of~\cite[Theorem~1.1]{Li08} and~\cite[Theorem~8.5.1]{NP10}, $W$ is orientation-preserving diffeomorphic to $B(s_{k,m})$.   

We are going to prove Part (1) of Theorem~\ref{t:main1} by showing that $\CP^2$ is obtained by attaching some 
4-dimensional handles to $B(s_{k,m})$. First we attach two extra 2-handles along the 
meridians $\mu_m$ and $\mu_{(k+2)m+2}$, both with framing $1$. Notice that the indices $m$ and 
$(k+2)m+2$ give the positions where $s_{k,m}$ and $s''_{k,m}$ are different. As before, we rename 
these two meridians as $\nu_3$ and $\nu_1$ respectively, so that we encounter $\nu_1$, $\nu_2$ and 
$\nu_3$ in this order as we move along the diagram from right to left. Denote by $X$ the smooth  $4$-manifold with boundary constructed so far. If we view $\nu_1$, $\nu_2$ and $\nu_3$ as part of a surgery presentation and blow them down we get a chain of unknots whose framing coefficients are exactly given by $s''_{k,m}$. This shows that the boundary of $X$ is $S^1 \x S^2$. 
We can now add a $3-$handle and a $4-$handle to $X$ and obtain a closed $4-$manifold 
$\widehat X$. 

Our plan is to show that $\widehat X$ is diffeomorphic to $\CP^2$. 
In order to do that, we view $\nu_1, \nu_2, \nu_3\subset S^1\x S^2$ as knots 
sitting inside a regular neighborhood $U$ of $\del V_1\subset S^1\x S^2$ as in Part (1) of 
Proposition~\ref{p:slamdunk}. The proof of Proposition~\ref{p:slamdunk} shows that $U$ can be identified with $T^2\x [0,1]$ in such a way that each $\nu_i$ is identified with a 
simple closed curve $T^2 \x \{h_i\}$, where  $1>h_1>h_2>h_3>0$. Moreover, the framing induced by 
$\del N_i$ on $\nu_i$ coincides with the framing induced by $T^2\x \{h_i\}$. 
We introduce the notation
\begin{equation}\label{e:handles}
(\nu_1,\nu_2,\nu_3)=\left(\begin{pmatrix}
p_1 \\
q_1
\end{pmatrix}_{\delta_1},
\begin{pmatrix}
p_2 \\
q_2
\end{pmatrix}_{\delta_2},
\begin{pmatrix}
p_3 \\
q_3
\end{pmatrix}_{\delta_3}\right)
\end{equation}
to indicate that $\nu_i$ is $\delta_i$-framed (with $\delta_i=\pm 1$) with respect to the framing induced by $T^2\x\{h_i\}$ and the coordinates of the homology class of $\nu_i$ with respect to the basis $\ell_1$, $m_1$ are $(p_i,q_i)$. 

If we view $S^1\x S^2$ as $L(s'_{k,m})$, applying Part (2) of Proposition~\ref{p:slamdunk} for $t=n$ gives the standard presentation of $S^1\x S^2$ as $L((0))$, ie as $0$-surgery on an unknot. 
This way, $\del V_1$ gets identified with the boundary of a neighborhood of such unknot, $m_1$ with a longitude and $\ell_1$ with a meridian. 

Recall that, given a closed, oriented 3-manifold $M$ represented by a framed link with integer 
coefficients $\mathcal{L}$, there is a convenient way to represent handlebody decompositions of any 
$4$-dimensional cobordism $X$ obtained by attaching 4-dimensional 
handles to $M \times [0,1]$ along $M \times \{1\}$. In fact, the attaching curves of the 2-handles 
can always be isotoped into the complement of the glued-in solid tori of $M \x \{1\}$, so that each 
2-handle can be represented as an additional framed knot in $S^3\setminus\mathcal{L}$. The union of 
all such framed knots with $\mathcal{L}$ is a~\emph{relative Kirby diagram} representing $X$. 
This representation requires a notation which distinguishes the role played by each component. If the
framing coefficient of a knot $K$ is $n$, we are going to write it as $\langle n \rangle$ if $K$ is part of $\mathcal{L}$, and simply as $n$ if $K$ represents a 2-handle of $X$. Of course, we can 
also attach 3- and 4-handles as usual. There is a calculus for these handlebody presentations, 
usually called {\em relative Kirby calculus}. We refer the reader to~\cite[\S~5.5]{GS94} for further 
details. 

We are going to apply relative Kirby calculus to the handle decomposition of 
$\widehat X$ we just described. It turns out that the effect of sliding the handle 
$h_{\nu_i}$ attached along $\nu_i$ over (an appropriate number of copies of) 
the handle $h_{\nu_{i+1}}$ attached over $\nu_{i+1}$, for $i=1,2$, was described in~\cite[Lemma~5.1]{TY12}. 
In terms of our Notation~\eqref{e:handles}, the action of such 
handle slides on the triples of coordinates is given by the following 
\emph{sliding map} $F$, which can be applied to any two consecutive components of the 
triple as follows: 
\begin{equation}\label{e:slide}
F \left( \begin{pmatrix}
p\\
q
\end{pmatrix}_\delta,
\begin{pmatrix}
p_0\\
q_0
\end{pmatrix}_{\delta_0}\right)=
\left( \begin{pmatrix}
p_0\\
q_0
\end{pmatrix}_{\delta_0},
\begin{pmatrix}
p-\delta_0 \Delta_0 p_0\\
q-\delta_0 \Delta_0 q_0
\end{pmatrix}_{\delta}\right), \quad \text{where }\Delta_0=p_0q-q_0p.
\end{equation}

\begin{rem}\label{r:signs}
	Recall that the curves $\nu_i$ are oriented, and therefore so are the handles $h_{\nu_i}$. The sliding map $F$ describes the change of coordinates of the homology classes of the attaching curves as a result of a {\em handle addition} of oriented 2-handles (cf.~\cite[\S 5.1]{GS94}). On the other hand, the 4-manifold resulting from attaching a 2-handle does not depend on the choice of an orientation on the 2-handle, therefore a triple as in (\ref{e:handles}) can be modified by changing the signs of a pair $(p_i,q_i)$ (but not $\delta_i$) without changing the resulting 4-manifold up to diffeomorphisms.
\end{rem}

Our strategy to prove that $\widehat X$ is diffeomorphic to $\CP^2$ will be as follows. 
We will exhibit a sequence of slides such that the coordinates $p_i$ and $q_i$ gradually get smaller, until we end up with a familiar Kirby diagram for $\CP^2$.

We now show that, for any pair $(k,m)$ as above, the map $F$ transforms the starting triple $(\nu_1,\nu_2,\nu_3)$ into
\[
\left( \begin{pmatrix}
0\\
1
\end{pmatrix}_1,
\begin{pmatrix}
1\\
0
\end{pmatrix}_{-1},
\begin{pmatrix}
1\\
0
\end{pmatrix}_1\right),
\]
In order to do that we need to determine the coordinates $(p_i,q_i)$ of~\eqref{e:handles} in terms of $k$ and $m$. These will be given by products of $2 \x 2$ matrices as in Proposition~\ref{p:slamdunk}. Since the substring $(2^{[m-1]},m+2)$ occurs repeatedly in $s_{k,m}$, it will be useful to find a general formula for $A_2(A_2^{m-1}A_{m+2})^l$ (recall Notation~\ref{e:2x2}). For this purpose, observe that an obvious induction gives
\[
A_2^{m-1} = 
\mat{m&1-m\\m-1&2-m}.
\]
Then, define
\[
C:=\begin{pmatrix}
x+1 & -1\\
x & -1
\end{pmatrix}\in \text{GL}_2(\Z [x]).
\]
It is easy to check that the matrix $A_2^{m-1}A_{m+2}$ is obtained by evaluating the entries of $C^2$ at $m$. 

Now for $l\in\Z$ let $M_l := A_2 C^l$ and define the $\Z$-indexed sequences of polynomials $(P_l)$, $(Q_l)$, 
$(S_l)$ and $(T_l)$ by setting  
\begin{equation} \label{e:A2Bk}
\mat{P_l& -S_l\\Q_l& -T_l} := M_l.
\end{equation} 
Since $C^2 = xC + I$ we have $M_{l+2} = xM_{l+1} + M_l$, therefore each sequence 
satisfies the recursive formula
\begin{equation}\label{e:recursion}
f_{l+2}=x \cdot f_{l+1}+f_l.
\end{equation}
Such sequences are completely determined by their values at two adjacent indices. Moreover, 
\begin{itemize}
	\item by setting $l=0$ in (\ref{e:A2Bk}), we immediately get $P_0=2$, $Q_0=S_0=1$ and $T_0=0$;
	\item by setting $l=1$ and computing $A_2 C$ we get $P_1=x+2$, 
	$Q_1=x+1$ and $S_1=T_1=1$. 
\end{itemize}
The following table shows a few terms of the four sequences:
\begin{center}
	\begin{tabular}{|c || c | c | c | c |} 
		\hline
		$l$ & $P_l$ & $Q_l$ & $S_l$ & $T_l$ \\ 
		\hline\hline
		$-1$ & $-x+2$ & $1$ & $1-x$ & $1$\\ 
		\hline
		$0$ & $2$ & $1$ & $1$ & $0$\\
		\hline
		$1$ & $x+2$ & $x+1$ & $1$ & $1$\\
		\hline
		$2$ & $x^2+2x+2$ & $x^2+x+1$ & $x+1$ & $x$\\
		\hline
	\end{tabular}
\end{center}
The values in the table together with~\eqref{e:recursion} imply that $S_l=Q_{l-1}$, therefore 
\[
M_l = A_2 C^l=\begin{pmatrix}
P_l & -Q_{l-1}\\
Q_l & -T_l
\end{pmatrix} \quad \forall l \in \Z.
\]

\begin{lemma}\label{l:identities}
	The sequences $(P_l)$, $(Q_l)$ and $(T_l)$ satisfy the following identities:
	\[
	(1)\hspace{5pt} P_{l+1}-P_l=x \cdot Q_l;\quad
	(2)\hspace{5pt} Q_{l+1}-Q_l=x \cdot T_{l+1};\quad 
	(3)\hspace{5pt} Q_{l+1}+Q_l=P_{l+1};\quad  
	(4)\hspace{5pt} T_l+T_{l-1}=Q_l; 
	\]
	\[
	(5)\hspace{5pt} P_{l+1}Q_l-P_lQ_{l+1}=(-1)^{l+1}x;\quad 
	(6)\hspace{5pt} Q_{2l}Q_{2l-1}-P_{2l}T_{2l}=1;\quad 
	(7)\hspace{5pt} P_{2l}T_{2l-1}-Q_{2l-1}^2=1.
	\]
\end{lemma}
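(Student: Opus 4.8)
The plan is to derive everything from the matrix factorisation $M_l=A_2C^l$ together with the observation that each of $(P_l),(Q_l),(S_l),(T_l)$ --- and hence every $\Z[x]$-linear combination of these sequences and their index-shifts --- satisfies the common recursion \eqref{e:recursion}. Since \eqref{e:recursion} can be run backwards over $\Z[x]$ (one has $f_l=f_{l+2}-x f_{l+1}$), any sequence satisfying it is uniquely determined by its values at two consecutive indices. Thus for the linear identities (1)--(4) it suffices to observe that both sides, viewed as functions of $l$, satisfy \eqref{e:recursion}, and then to check equality at $l=0$ and $l=1$ using the initial data $P_0=2,\ Q_0=1,\ T_0=0$ and $P_1=x+2,\ Q_1=x+1,\ T_1=1$ read off from $M_0=A_2$ and $M_1=A_2C$. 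Some of these can instead be obtained directly: for instance (3) is the comparison of the $(1,2)$-entries on the two sides of $M_{l+1}=M_lC$, and (1) then follows from (3) and \eqref{e:recursion} via $P_{l+1}-P_l=(Q_{l+1}+Q_l)-(Q_l+Q_{l-1})=Q_{l+1}-Q_{l-1}=x Q_l$.

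For the quadratic identities (5)--(7) I would instead realise each as a $2\times 2$ determinant assembled from the $M_l$, using $\det A_2=1$ and $\det C=(x+1)(-1)-(-1)x=-1$, so that $\det M_l=(-1)^l$. Identity (6) is then immediate: expanding $\det M_{2l}$ in the normal form $M_{2l}=\sm{P_{2l}&-Q_{2l-1}\\Q_{2l}&-T_{2l}}$ gives $Q_{2l}Q_{2l-1}-P_{2l}T_{2l}=(-1)^{2l}=1$. For (5), the first columns of $M_l$ and of $M_{l+1}=M_lC$ are $\sm{P_l\\Q_l}$ and $M_l\sm{x+1\\x}$, so $\sm{P_l&P_{l+1}\\Q_l&Q_{l+1}}=M_l\sm{1&x+1\\0&x}$ has determinant $(-1)^l x$, which rearranges to $P_{l+1}Q_l-P_lQ_{l+1}=(-1)^{l+1}x$.

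The identity I expect to cost the most work is (7), because its left-hand side $P_{2l}T_{2l-1}-Q_{2l-1}^2$ is not the determinant of any single $M_l$. The idea is to first record the closed form $C^l=\sm{Q_l&-T_l\\x T_l&Q_l-(x+2)T_l}$ --- proved again by the two-initial-values argument, and equivalent to identities (2) and (3) --- and then to substitute the resulting expressions $P_{2l}=2Q_{2l}-xT_{2l}$, $Q_{2l-1}=Q_{2l}-xT_{2l}$ and $T_{2l-1}=Q_{2l}-(x+1)T_{2l}$ (the last via (2) and \eqref{e:recursion}) into $P_{2l}T_{2l-1}-Q_{2l-1}^2$. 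Expanding should collapse this to the quadratic form $Q_{2l}^2-(x+2)Q_{2l}T_{2l}+xT_{2l}^2$, which is exactly $\det C^{2l}=(-1)^{2l}=1$. The points requiring care are the parity and sign bookkeeping --- this is where the restriction to even indices in (6) and (7) comes in --- and verifying that the collapse in (7) lands precisely on $\det C^{2l}$ rather than on something off by lower-order terms; apart from these, every step is either a determinant computation or a check at $l=0,1$.
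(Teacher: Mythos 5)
Your method for (1)--(4) is exactly the paper's (both sides of each identity satisfy the recursion \eqref{e:recursion}, so it suffices to check two consecutive indices), and your (6) is identical to the paper's determinant argument. However, if you actually carry out the promised check of (4) at $l=1$ it fails: $T_1+T_0=1+0=1$, while $Q_1=x+1$. Identity (4) as printed is off by one in the index; the identity that is true (verify at $l=0,1$ and invoke the recursion) is $T_{l+1}+T_l=Q_l$, equivalently $T_l+T_{l-1}=Q_{l-1}$. This is an error in the statement rather than in your method, but your proposal as written would not detect it, and a correct write-up must state and prove the corrected version. The corrected form is also what is genuinely needed downstream: deducing (7) from (6) requires the substitution $T_{2l}=Q_{2l-1}-T_{2l-1}$, whereas the literal (4) would give $T_{2l}=Q_{2l}-T_{2l-1}$ and lead to the false identity $P_{2l}T_{2l-1}-Q_{2l}^2=1$. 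Your own closed form for $C^l$ already encodes the corrected relation $T_{l+1}=Q_l-T_l$ in its $(2,1)$-entry, so the rest of your argument is consistent with the fix.

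Apart from this, the proposal is correct, and in two places it departs from the paper in ways that work. For (5) the paper shows that $P_{l+1}Q_l-P_lQ_{l+1}$ is a geometric progression with ratio $-1$ and checks $l=0$; your computation $\det\bigl(M_l\left(\begin{smallmatrix}1&x+1\\0&x\end{smallmatrix}\right)\bigr)=(-1)^l x$, using $\det C=-1$ so that $\det M_l=(-1)^l$, is an equivalent repackaging of the same fact. For (7) the paper gives a two-line derivation from (6) by substituting (3) and (the corrected) (4); you instead establish $C^l=\left(\begin{smallmatrix}Q_l&-T_l\\xT_l&Q_l-(x+2)T_l\end{smallmatrix}\right)$, express $P_{2l}$, $Q_{2l-1}$, $T_{2l-1}$ in terms of $Q_{2l}$ and $T_{2l}$, and collapse $P_{2l}T_{2l-1}-Q_{2l-1}^2$ to $\det C^{2l}=1$. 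The algebra does close up: $(2Q-xT)\bigl(Q-(x+1)T\bigr)-(Q-xT)^2=Q^2-(x+2)QT+xT^2$, which is exactly $\det C^{2l}$. This route is longer than the paper's, but it has the virtue of being independent of identity (4) and so is unaffected by the index error noted above.
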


\begin{proof}
	Both sides of (1), (2), (3) and (4) are the terms of two sequences of polynomials satisfying  the recursive formula (\ref{e:recursion}), therefore it is enough to verify the identities for two distinct values of $l$, say $0$ and $1$.
	(5) We first claim that $(P_{l+1}Q_l-P_lQ_{l+1})$ is a geometric progression with common ratio $-1$: by (\ref{e:recursion}), we have
	\[
	P_{l+1}Q_l-P_lQ_{l+1}=(x P_l+P_{l-1})Q_l-P_l (xQ_l+Q_{l-1})=-(P_{l}Q_{l-1}-P_{l-1}Q_l),
	\]
	which proves the claim. Now it is enough to verify the identity for $l=0$.
	(6) The LHS can be written as $\det(M_{2l}) = \det(A_2 C^{2l})$, which is immediately seen to be $1$, since $\det(A_2)=\det(C^2)=1$.
	Finally, (7) follows from (6) by substituting $Q_{2l}=P_{2l}-Q_{2l-1}$ and $T_{2l}=Q_{2l}-T_{2l-1}$, which is allowed by (3) and (4). 
\end{proof}

We can now compute the coordinates $(p_i,q_i)$ of $\nu_1$, $\nu_2$ and $\nu_3$:
\begin{itemize}
	\item $\nu_3$ is given by the first column of $A_2^{m-1}$, which is $\begin{pmatrix} m \\ m-1\end{pmatrix}$;
	\item $\nu_2$ is given by the first column of $A_2 C^{2k+2}|_{x=m} = M_{2k+2}|_{x=m}$, which is $\begin{pmatrix} P_{2k+2}(m) \\ Q_{2k+2}(m)\end{pmatrix}$;
	\item $\nu_1$ is given by the first column of 
	$A_2 C^{2k+2}|_{x=m} A_1 A_2^{m-1} = M_{2k+2}|_{x=m} A_1 A_2^{m-1}$, which is  
	\[
	M_{2k+2}|_{x=m} A_1 \begin{pmatrix}m \\m-1\end{pmatrix} = 
	\begin{pmatrix} 
	P_{2k+2}(m) & -Q_{2k+1}(m)\\
	Q_{2k+2}(m) & -T_{2k+2}(m)
	\end{pmatrix} 
	\begin{pmatrix}1 \\m \end{pmatrix} = 
	\begin{pmatrix}
	P_{2k+1}(m)\\
	Q_{2k+1}(m)
	\end{pmatrix},
	\] 
	where the last equality holds by Identities (1) and (2) of Lemma \ref{l:identities}.
\end{itemize}
Therefore, if for any pair $(l,m)$ of integers we define
\[ \tau_{l,m}:=\left( \begin{pmatrix}
P_{l+1}(m)\\
Q_{l+1}(m)
\end{pmatrix}_{(-1)^l},
\begin{pmatrix}
P_{l+2}(m)\\
Q_{l+2}(m)
\end{pmatrix}_{(-1)^{l+1}},
\begin{pmatrix}
m\\
m-1
\end{pmatrix}_1\right), \]
the starting triple that arises from $B(s_{k,m})$ via the previous construction is $\tau_{2k,m}$.

\begin{lemma}\label{l:km-ind}
	The following hold:
	\begin{itemize}
		\item[(1)] $\tau_{l,m}$ can be transformed into $\tau_{l+1,m}$ by applying the sliding map $F$ to the first two components;
		\item[(2)] any pair of the form $\left(\begin{pmatrix}
		a+2\\
		a+1
		\end{pmatrix}_1,
		\begin{pmatrix}
		a\\
		a-1
		\end{pmatrix}_1\right)$ can be transformed into $\left(\begin{pmatrix}
		a\\
		a-1
		\end{pmatrix}_1,
		\begin{pmatrix}
		a-2\\
		a-3
		\end{pmatrix}_1\right)$ by applying $F$ and changing the signs in the second component;
		\item[(3)] $F^3 \left( \begin{pmatrix}
		0\\
		1
		\end{pmatrix}_1,
		\begin{pmatrix}
		1\\
		0
		\end{pmatrix}_{-1}
		\right)=
		\left( \begin{pmatrix}
		2\\
		1
		\end{pmatrix}_{-1},
		\begin{pmatrix}
		3\\
		2
		\end{pmatrix}_1
		\right)$.
	\end{itemize}
\end{lemma}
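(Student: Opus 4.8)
The plan is to verify all three statements by unwinding the definition of the sliding map $F$ in~\eqref{e:slide}, the only nontrivial ingredient being the identities collected in Lemma~\ref{l:identities}. Throughout I would keep careful track of the framings $\delta_i\in\{\pm1\}$, since their bookkeeping is the only delicate point, and I would freely change the sign of a pair of coordinates whenever convenient, as permitted by Remark~\ref{r:signs}.

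For part~(1), I would write the first two components of $\tau_{l,m}$ as $\sm{p\\q}_{\delta}=\sm{P_{l+1}(m)\\Q_{l+1}(m)}_{(-1)^l}$ and $\sm{p_0\\q_0}_{\delta_0}=\sm{P_{l+2}(m)\\Q_{l+2}(m)}_{(-1)^{l+1}}$, and compute $\Delta_0=p_0q-q_0p=P_{l+2}(m)Q_{l+1}(m)-Q_{l+2}(m)P_{l+1}(m)$. By Identity~(5) of Lemma~\ref{l:identities}, after substituting $x=m$ this equals $(-1)^{l}m$, hence $\delta_0\Delta_0=-m$. Consequently the new second component produced by $F$ has coordinates $p-\delta_0\Delta_0 p_0=P_{l+1}(m)+m\,P_{l+2}(m)$ and $q-\delta_0\Delta_0 q_0=Q_{l+1}(m)+m\,Q_{l+2}(m)$, which by the recursion~\eqref{e:recursion} (evaluated at $x=m$) are exactly $P_{l+3}(m)$ and $Q_{l+3}(m)$, carried with the framing $(-1)^l=(-1)^{l+2}$. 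Since $F$ leaves the third component untouched, the result is precisely $\tau_{l+1,m}$.

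For part~(2), I would apply $F$ with $\sm{p\\q}_{\delta}=\sm{a+2\\a+1}_{1}$ and $\sm{p_0\\q_0}_{\delta_0}=\sm{a\\a-1}_{1}$: here $\Delta_0=a(a+1)-(a-1)(a+2)=2$, so the new second component is $\sm{(a+2)-2a\\(a+1)-2(a-1)}_{1}=\sm{-(a-2)\\-(a-3)}_{1}$, which becomes $\sm{a-2\\a-3}_{1}$ after the sign change. For part~(3), I would apply $F$ three times starting from $\bigl(\sm{0\\1}_1,\sm{1\\0}_{-1}\bigr)$: the successive triples are $\bigl(\sm{1\\0}_{-1},\sm{1\\1}_{1}\bigr)$, then $\bigl(\sm{1\\1}_{1},\sm{2\\1}_{-1}\bigr)$, and finally $\bigl(\sm{2\\1}_{-1},\sm{3\\2}_{1}\bigr)$, as claimed.

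There is no real obstacle here: once $F$ is unwound, everything is elementary arithmetic. The one point that genuinely requires Lemma~\ref{l:identities}, rather than a bare substitution, is checking that the sign in $P_{l+1}Q_l-P_lQ_{l+1}=(-1)^{l+1}x$ is exactly the one that makes $\delta_0\Delta_0$ collapse to $-m$; this is what converts the $F$-update into the recursion step $(P_{l+1},Q_{l+1})\mapsto(P_{l+3},Q_{l+3})$ while preserving the framing parities, and hence what makes part~(1) work.
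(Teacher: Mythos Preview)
Your proposal is correct and follows essentially the same route as the paper: you unwind the sliding map $F$, use Identity~(5) of Lemma~\ref{l:identities} to get $\delta_0\Delta_0=-m$ in part~(1) so that the recursion~\eqref{e:recursion} advances the index, and handle parts~(2) and~(3) by direct computation. Your write-up is in fact slightly more explicit than the paper's in parts~(2) and~(3), where the paper simply calls the verification ``straightforward'' and notes that $\delta_0\Delta_0$ remains constant through the three iterations in~(3).
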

\begin{proof}
	We immediately see from (\ref{e:slide}) that $\tau_{l,m}$ is transformed into
	
	\[ \left( \begin{pmatrix}
	P_{l+2}(m)\\
	Q_{l+2}(m)
	\end{pmatrix}_{(-1)^{l+1}},
	\begin{pmatrix}
	P_{l+1}(m)-\delta_0 \Delta_0 P_{l+2}(m)\\
	Q_{l+1}(m)-\delta_0 \Delta_0 Q_{l+2}(m)
	\end{pmatrix}_{(-1)^{l}},
	\begin{pmatrix}
	m\\
	m-1
	\end{pmatrix}_1\right), \]
	which clearly agrees with $\tau_{l+1,m}$ at the first and the third components and at the framing of the second one. Therefore, we are left with verifying that
	
	\[
	P_{l+1}(m)-\delta_0 \Delta_0 P_{l+2}(m)=P_{l+3}(m) \quad \text{and} \quad Q_{l+1}(m)-\delta_0 \Delta_0 Q_{l+2}(m)=Q_{l+3}(m).
	\]
	
	By (\ref{e:recursion}), both these equalities follow from $\delta_0 \Delta_0=-m$: this is true because
	
	\[
	\delta_0 \Delta_0=(-1)^{l+1} (P_{l+2}(m)Q_{l+1}(m)-P_{l+1}(m)Q_{l+2}(m))=(-1)^{l+1}(-1)^l m=-m
	\]
	where the second equality holds by Lemma \ref{l:identities}(5). This proves (1).
	Finally, (2) and (3) follow from a straightforward computation; in particular, in order to prove (3), it is useful to observe that the quantity $\delta_0 \Delta_0$ stays unchanged at each step, since both $\delta_0$ and $\Delta_0$ change sign.
\end{proof}

Now, in  order to prove Theorem 1(1), we must show that the triple $\tau_{2k,m}$ corresponds to a Kirby diagram for $\CP^2$. By Lemma \ref{l:km-ind}(1), it is enough to prove this for
\[
\tau_{-1,m}=\left( \begin{pmatrix}
2\\
1
\end{pmatrix}_{-1},
\begin{pmatrix}
m+2\\
m+1
\end{pmatrix}_1,
\begin{pmatrix}
m\\
m-1
\end{pmatrix}_1\right).
\]
We can apply Lemma~\ref{l:km-ind}(2) several times to the last two components. Observe that all coordinates decrease by $2$ at each step and recall that $m$ is odd. After $\frac{m-1}{2}$ applications of Lemma~\ref{l:km-ind}(2) we get 
\[
\left( \begin{pmatrix}2\\1\end{pmatrix}_{-1},
\begin{pmatrix}3\\2\end{pmatrix}_1,
\begin{pmatrix}1\\0\end{pmatrix}_1\right),
\]
and finally, applying Lemma~\ref{l:km-ind}(3) to the first two components,
\[
\left( \begin{pmatrix}0\\1\end{pmatrix}_1,
\begin{pmatrix}1\\0\end{pmatrix}_{-1},
\begin{pmatrix}
1\\0\end{pmatrix}_1\right).
\]

The last step in the proof of Theorem~\ref{t:main1}(1) is the following:

\begin{lemma} \label{l:cp2}
	The triple $\left( \begin{pmatrix}
	0\\
	1
	\end{pmatrix}_1,
	\begin{pmatrix}
	1\\
	0
	\end{pmatrix}_{-1},
	\begin{pmatrix}
	1\\
	0
	\end{pmatrix}_1\right)$ corresponds to a Kirby diagram for $\CP^2$.
\end{lemma}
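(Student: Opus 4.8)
\emph{Proof strategy.}
The plan is to read the stated triple as the relative Kirby diagram of the closed $4$-manifold $\widehat X$ it encodes (recall that $\widehat X$ is unchanged under the handle slides used to reduce $\tau_{2k,m}$ to this triple), and then to simplify that diagram by handle cancellations until it becomes the standard decomposition $B^4\cup h^2\cup h^4$ of $\CP^2$ with $h^2$ attached along a $+1$-framed unknot.

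First I would make the dictionary explicit, using the identifications set up just before the lemma: the piece $S^1\x D^3$ contributes the $\langle 0\rangle$-framed unknot $J$, and under $\del V_1=\del N(J)$ the class $m_1$ is a longitude of $J$ while $\ell_1$ is a meridian. Reading the coordinates in the basis $(\ell_1,m_1)$, we get $\nu_1\simeq m_1$, so $\nu_1$ is isotopic to the $0$-framed longitude of $J$; hence it bounds a disk in the complement of $J$ (and away from $\nu_2,\nu_3$), it is an unknot split from everything, and since $\delta_1=1$ and the framing induced by $T^2\x\{h_1\}$ agrees with the $0$-framing it carries framing $+1$. Likewise $\nu_2\simeq\nu_3\simeq\ell_1$ are meridians of $J$, each running over the $1$-handle exactly once, unlinked from $\nu_1$ and from one another, with framings $-1$ and $+1$ respectively (again the induced framings are $0$). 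So the diagram is: the $1$-handle $J$, a split $+1$-framed unknot $\nu_1$, two meridians $\nu_2$ (framing $-1$) and $\nu_3$ (framing $+1$) of $J$, together with one $3$-handle and one $4$-handle.

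Next I would cancel handles. Since $\nu_3$ runs once over the $1$-handle, slide it over $\nu_2$ once so that it no longer does; because $\mathrm{lk}(\nu_2,\nu_3)=0$ the resulting curve $\nu_3'$ is again a split unknot, unlinked from $\nu_1$, with framing $1+(-1)=0$. Now $\nu_2$ is a meridian of the $1$-handle running over it exactly once, so $\{J,\nu_2\}$ is a cancelling $1$/$2$-handle pair; removing it leaves $B^4\cup h^2_{\nu_1}\cup h^2_{\nu_3'}\cup h^3\cup h^4$ with $\nu_1$ a split $+1$-framed unknot and $\nu_3'$ a split $0$-framed unknot. Finally the split $0$-framed unknot $\nu_3'$ cancels the $3$-handle, and what remains is $B^4\cup h^2_{\nu_1}\cup h^4$, a Kirby diagram for $\CP^2$.

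The step needing the most care is the last cancellation: one must check that the $3$-handle of $\widehat X$ is the one dual to $\nu_3'$. This follows because after the previous moves $B^4\cup h^2_{\nu_1}\cup h^2_{\nu_3'}$ is diffeomorphic to $(\CP^2\setminus\mathring B^4)\,\natural\,(S^2\x D^2)$, with boundary $S^3\#(S^1\x S^2)=S^1\x S^2$; a count of handles shows there is exactly one $3$-handle, and since it together with the $4$-handle must close up the manifold while the $4$-handle caps an $S^3$, the $3$-handle has to surger the (unique up to isotopy) non-separating $2$-sphere in $S^1\x S^2$, which is precisely the belt sphere of $h^2_{\nu_3'}$. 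The only other points to watch are the orientation bookkeeping of Remark~\ref{r:signs} in the framing computations and the verification that $\nu_1$ and $\nu_3'$ are genuinely unknotted and split after the slide; both are routine.
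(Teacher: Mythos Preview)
There is a genuine geometric error in your second paragraph. You assert that $\nu_1$ ``bounds a disk in the complement of $J$ (and away from $\nu_2,\nu_3$)'', so that $\nu_1$ is an unknot split from everything. This is false. Recall that $1>h_1>h_2>h_3>0$, so among the three nested tori the one carrying $\nu_1$ is the \emph{innermost}, i.e.\ the closest to $\widehat K=J$. Consequently $\nu_2$ and $\nu_3$, being copies of $\ell_1$ on the two outer tori, bound meridian disks of solid neighbourhoods of $\widehat K$ that contain $\nu_1$; since $\nu_1=m_1$ is parallel to the core $\widehat K$ of those solid tori, each such disk meets $\nu_1$ once. In other words $\nu_2$ and $\nu_3$ are meridians of $\nu_1$ as well as of $\widehat K$, and $\mathrm{lk}(\nu_1,\nu_2)=\mathrm{lk}(\nu_1,\nu_3)=\pm 1$. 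The nonzero linking number already obstructs any disk for $\nu_1$ disjoint from $\nu_2$. (The paper's proof states this explicitly: ``$\nu_2$ and $\nu_3$ are two parallel copies of $\ell_1$, hence two unlinked meridians of both $\widehat K$ and $\nu_1$.'')

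This mistake propagates. Your slide of $\nu_3$ over $\nu_2$ does give framing $0$, but you then cancel the pair $\{J,\nu_2\}$ while $\nu_1$ still links $\nu_2$; the cancellation therefore does \emph{not} leave $\nu_1$ as an untouched split $+1$--unknot, and the conclusion that one is left with a split $+1$--unknot and a split $0$--unknot is unjustified. The paper's proof instead keeps track of the link $\nu_1\cup\nu_2\cup\nu_3$ correctly (with $\nu_2,\nu_3$ linking $\nu_1$), performs one slide turning the $-1$ into a $0$, cancels the $1$--$2$ pair, and only then is left with a $0$--framed unknot and a $+1$--framed unknot (linked appropriately), the first of which cancels the $3$--handle. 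Your overall strategy is the right one, but the link you draw is not the correct one, and the argument needs to be redone from that point.
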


\begin{proof}
	We have three knots in $T^2 \x [0,1] \subset S^1 \x S^2$, which can be glued to $V_1$ along $T^2 \x \{0\}$ to form a new solid torus, which we regard as the exterior of an unknot $\widehat{K}$ in $S^3$ (as in the proof of Proposition~\ref{p:slamdunk}). Consequently, we can regard $S^1 \x S^2$ as the result of a Dehn surgery along $\widehat{K}$ with framing $0$. Now, the attaching curves $\nu_1$, $\nu_2$ and $\nu_3$ of the $2-$handles are contained in three nested tori, each of which bounds a regular neighborhood of $\widehat{K}$. More precisely, $\nu_1$ is a parallel copy of $m_1$, hence a canonical longitude of $\widehat{K}$, while $\nu_2$ and $\nu_3$ are two parallel copies of $\ell_1$, hence two unlinked meridians of both $\widehat{K}$ and $\nu_1$. The left-most picture of Figure~\ref{fig:diag} illustrates the resulting handlebody decomposition. 
	\begin{figure}[ht]
		\labellist
		\pinlabel $\cup 3h\cup 4h$ at 60 -10
		\pinlabel $\cup 3h\cup 4h$ at 220 10
		\pinlabel $\cup 3h\cup 4h$ at 343 25
		\pinlabel $\cup 4h$ at 422 25
		\pinlabel $1$ at 20 82
		\pinlabel $-1$ at 88 65
		\pinlabel $1$ at 120 75
		\pinlabel $1$ at 215 80
		\pinlabel $0$ at 215 55
		\pinlabel $1$ at 240 57
		\pinlabel $0$ at 324 67
		\pinlabel $1$ at 361 67
		\pinlabel $1$ at 420 67
		\endlabellist
		\centering
		\includegraphics{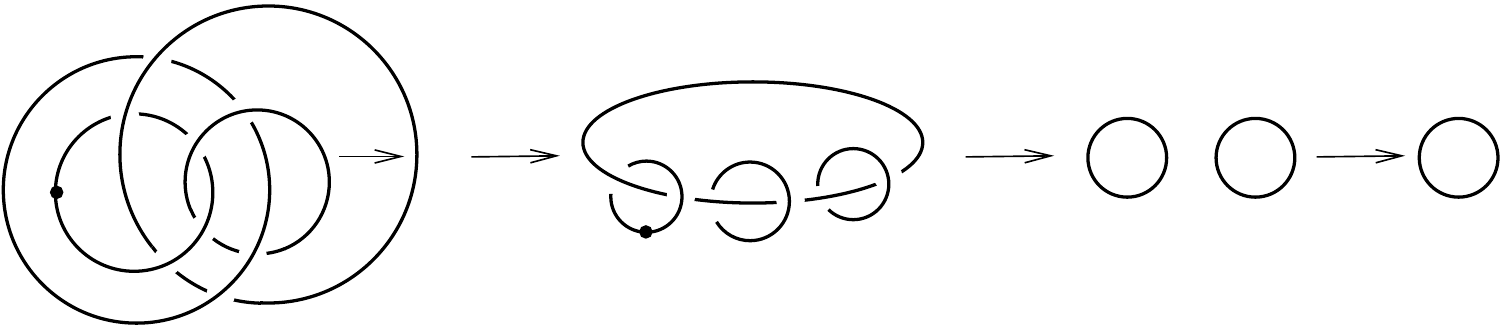}
		\caption{}
		\label{fig:diag}
	\end{figure}
	Performing the handle slide indicated by the horizontal arrow yields the second picture of Figure~\ref{fig:diag}, canceling the obvious $1$-$2$-handle pair yields the third picture, and canceling the $0$-framed unknot with the $3$-handle gives the well known Kirby diagram for $\CP^2$.
\end{proof}

By Lemmas~\ref{l:km-ind} and~\ref{l:cp2}, the 4-manifold $\widehat X$ is diffeomorphic to $\CP^2$. This proves the existence of the smooth embeddings, i.e.~Part (1) of Theorem~\ref{t:main1}.

Part (2) of Theorem~\ref{t:main1} follows from~\cite[Theorem~1]{Ow20} if $m=1$, so in the following we assume $m \ge 3$. By the results of Evans and Smith~\cite{ES18} recalled in Section~\ref{s:intro}, to show that $B(s_{k,m})$ does not symplectically embed in $\CP^2$ it suffices to write the lens space $L(s_{k,m})=\del B(s_{k,m})$ as $L(p^2,pq-1)$ and show that $p$ does not divide $q^2+9$. By Proposition~\ref{p:slamdunk} we can find such $p$ and $q$ by computing the first column of 
$M_{2k+2} A_2 M_{2k+2}|_{x=m}$: we have 
\[
M_{2k+2} A_2 M_{2k+2}\mat{1\\0}=
\begin{pmatrix}
P_{2k+2} & -Q_{2k+1}\\
Q_{2k+2} & -T_{2k+1} 
\end{pmatrix}
\begin{pmatrix}
2 & -1\\
1 & 0 
\end{pmatrix}
\begin{pmatrix}
P_{2k+2}\\
Q_{2k+2} 
\end{pmatrix}=
\]
\[
=\begin{pmatrix}
P_{2k+2} & -Q_{2k+1}\\
Q_{2k+2} & -T_{2k+2} 
\end{pmatrix}
\begin{pmatrix}
2P_{2k+2}-Q_{2k+2}\\
P_{2k+2} 
\end{pmatrix}\stackrel{(3)}{=}
\begin{pmatrix}
P_{2k+2} & -Q_{2k+1}\\
Q_{2k+2} & -T_{2k+2} 
\end{pmatrix}
\begin{pmatrix}
P_{2k+2}+Q_{2k+1}\\
P_{2k+2} 
\end{pmatrix}=
\]
\[
=\begin{pmatrix}
P_{2k+2}^2\\
P_{2k+2}(Q_{2k+2}-T_{2k+2})+Q_{2k+1}Q_{2k+2} 
\end{pmatrix}\stackrel{(6)}{=}
\begin{pmatrix}
P_{2k+2}^2\\
P_{2k+2}Q_{2k+2}+1 
\end{pmatrix}.
\]
The numbers above the equality symbols denote which identities from Lemma~\ref{l:identities} have been used. We can now obtain the first column of $M_{2k+2} A_2 M_{2k+2}|_{x=m}$ by evaluating the above polynomials at $m$. We obtain $p=P_{2k+2}(m)$ and $q=P_{2k+2}(m)-Q_{2k+2}(m)\stackrel{(3)}{=} Q_{2k+1}(m)$. By Lemma \ref{l:identities}(7),
\[
q^2+9=Q_{2k+1}(m)^2+9=P_{2k+2}(m) T_{2k+1}(m)+8,
\]
which is a multiple of $P_{2k+2}(m)$ if and only if $P_{2k+2}(m)\ |\ 8$. However, we can easily observe that, for each $l \ge 1$, $P_l$ is a monic polynomial of degree $l$ with positive coefficients, so that $P_{2k+2}(m) \ge m^{2k+2} \ge m^2 \ge 9$, and in particular $P_{2k+2}(m) \nmid 8$. This concludes the proof of Theorem~\ref{t:main1}.

\bibliographystyle{abbrv}
\bibliography{biblio}
\end{document}